\documentclass[11pt]{article}

\usepackage[margin=1in]{geometry}

\usepackage{amsmath,amssymb,amsthm}
\usepackage{hyperref}
\usepackage{graphicx}
\numberwithin{equation}{section} % Eqns numbered per section
\allowdisplaybreaks              % Allow multi-line displays to break across pages

\usepackage{xcolor}
\definecolor{rptu1}{RGB}{4,44,88}
\definecolor{rptu2}{RGB}{106,178,231}
\definecolor{rptublaugrau}{RGB}{80,114,137}
\definecolor{rptugruengrau}{RGB}{119,182,186}
\definecolor{rptudunkelblau}{RGB}{4,44,88}
\definecolor{rptuhellblau}{RGB}{106,178,231}
\definecolor{rptudunkelgruen}{RGB}{0,107,107}
\definecolor{rptuhellgruen}{RGB}{38,208,124}
\definecolor{rptuviolett}{RGB}{76,53,117}
\definecolor{rptupink}{RGB}{209,56,150}
\definecolor{rpturot}{RGB}{227,27,76}
\definecolor{rptuorange}{RGB}{255,162,82}
\definecolor{rptuschwarz}{RGB}{0,0,0}
\definecolor{rptuweiss}{RGB}{255,255,255}
\definecolor{darkpurple}{RGB}{75,0,130}

\usepackage{animate}
\usepackage{graphicx}
\usepackage{cancel}

\usepackage{appendix}

\title{Hypocoercive Langevin dynamics on the Lie group $\textbf{SE}(2)$}

\author{
  Martin Grothaus\\
  \textit{Department of Mathematics, RPTU University  Kaiserslautern--Landau}\\
  \texttt{grothaus@rptu.de}\\[1em]
  Andrea V. Hurtado-Quiceno\\
  \textit{Department of Mathematics, RPTU University  Kaiserslautern--Landau}\\
  \texttt{avanessa.hquiceno@edu.rptu.de}
}

\newcommand{\AuthorInfo}[3]{%
  \noindent\textbf{#1}\\#2\\\textit{E-mail address:} \texttt{#3}\vspace{1em}\\
}

\usepackage{enumitem}

\DeclareMathOperator{\A}{\mathcal{A}}

\def\L{\mathcal{L}}
\def\S{\mathcal{S}}

\def\Hi{\mathcal{H}}

\def\P{\mathrm{\Pi}}

\def\Hi{\mathcal{H}}

\def\SE{\textbf{$\mathrm{SE}$}}

\def\P{\mathrm{\Pi}}

\theoremstyle{plain} % this makes the body text italic

\usepackage{cancel}
\newtheorem{theorem}{Theorem}[section]

\newtheorem{proposition}[theorem]{Proposition}

\newtheorem{definition}[theorem]{Definition}

\newtheorem{remark}[theorem]{Remark}
\numberwithin{equation}{section}

\hypersetup{
    colorlinks=false,
    pdfborder={0 0 1},   % forces visible border
    citebordercolor=rptupink
}

\usepackage{parskip}
\makeatletter
\newcommand\ackname{Acknowledgements}
\if@titlepage
  \newenvironment{acknowledgements}{%
      \titlepage
      \null\vfil
      \@beginparpenalty\@lowpenalty
      \begin{center}%
        \bfseries \ackname
        \@endparpenalty\@M
      \end{center}}%
     {\par\vfil\null\endtitlepage}
\else
  \newenvironment{acknowledgements}{%
      \if@twocolumn
        \section*{\abstractname}%
      \else
        \small
        \begin{center}%
          {\bfseries \ackname\vspace{-.5em}\vspace{\z@}}%
        \end{center}%
        \quotation
      \fi}
      {\if@twocolumn\else\endquotation\fi}
\fi
\makeatother

\date{\today}

\begin{document}
\maketitle

\abstract{ \noindent We consider a Langevin-type diffusion on the planar motion group $\SE(2)$, describing the coupled evolution of position and orientation with degenerate noise acting only in the rotational direction. Although hypocoercivity for related models on $\mathbb{R}^2 \times \mathbb{S}^1$ is well understood, our purpose is to present an intrinsic formulation on the Lie group $\SE(2)$, and to highlight the underlying geometric mechanism. By expressing the generator in terms of invariant vector fields and using the natural projection onto the kernel of the symmetric part, we show how an effective macroscopic diffusion on $\mathbb{R}^2$ emerges through averaging over the compact rotation subgroup.}

%-----------
\tableofcontents
\bigskip

%-------------------------

%%%%%%%%%%%%%%%%%%%%%%%%%%%%%%%%%%%%%%%%%%
\section{Introduction}\label{sec:1}

In this article, we study hypocoercive Kolmogorov-type operators on the Lie group $\SE(2)$, the group of orientation-preserving Euclidean motions of the plane. This Lie group carries a natural semidirect product structure, combining translations and rotations, and provides a canonical configuration space for stochastic models describing coupled position-orientation dynamics. Such models arise naturally in kinetic theory, geometric mechanics, and the study of degenerate diffusion processes on manifolds.\\

\noindent It is well known, following H\"ormander's celebrated theorem, that differential operators of the form

\begin{equation*}
\L = \sum_{i=1}^m X_i^2 + X_0,
\end{equation*}

\noindent where the vector fields $\{X_i\}$ generate the full tangent space through their commutators, are hypoelliptic. While hypoellipticity concerns regularity properties of solutions, the focus of the present work is of a different nature. We investigate hypocoercivity, that is, the convergence to equilibrium for degenerate Markov generators whose symmetric part fails to be coercive.\\

\noindent More precisely, we consider operators admitting a decomposition $\L = \S - \A$, where the symmetric part $\S$ acts only on a subset of directions and is therefore non-coercive, while the antisymmetric part $\A$ couples the remaining directions through the commutators. Although $\S$ alone does not control all degrees of freedom, the interaction between $\S$ and $\A$ yields convergence to equilibrium.\\ 

\noindent The hypocoercivity term was developed first by Cedric Villani in 
\cite{Villani2009}. The analysis in the present work is carried out within the 
abstract Hilbert space hypocoercivity framework introduced by Dolbeault, 
Mouhot, and Schmeiser \cite{dolbeault2015hypocoercivity}, and subsequently 
refined to address domain issues in \cite{GrotStil}. Rather 
than focusing on technical refinements of the method, we use this framework as 
a tool to highlight the geometric features of the model on $\SE(2)$.\\

\noindent Ergodicity with explicit rate for related models on product spaces, such 
as $\mathbb{R}^2 \times \mathbb{S}^1$, has been studied in detail in previous works, see e.g. \cite{GrotKlar08}. In these approaches, the geometry of the 
state space is typically encoded through explicit coordinate representations. 
The purpose of the present article is to reformulate these dynamics 
intrinsically on the Lie group $\SE(2)$, and to make explicit the role played 
by invariant vector fields and group structure.\\

\noindent Although the convergence properties are consistent with existing results, the 
Lie group formulation offers a geometric interpretation of hypocoercive 
Langevin dynamics on $\SE(2)$. In particular, the contribution of the 
translation variables to the macroscopic coercivity becomes apparent through 
the orthogonal projection that averages over the rotational component. This 
Lie group perspective clarifies the structural origin of hypocoercivity in 
orientation-position dynamics on $\SE(2)$.

%%%%%%%%%%%%%%%%%%%%%%%%%%%%%%%%%%
\section{Geometry of $\SE(2)$}\label{subsec:se2_geometry_vecfields}

%\textcolor{red}{Check this reference really well.} \\
The special Euclidean $\SE(2)$ group describes the orientation-preserving isometries of the Euclidean space $\mathbb{R}^2$. The special Euclidean group $\SE(2)$,
can be represented as

\begin{equation}
\begin{aligned}
\SE(2) & :=\left\{\left. \left(\begin{array}{cc}
R_{\theta} & \xi \\ 
0_{1 \times 2} & 1
\end{array}\right) \in \mathbb{R}^{3 \times 3} \right\rvert\, \theta \in(0, 2\pi], \xi \in \mathbb{R}^2\right\},
\end{aligned}
\end{equation}

\noindent where $$R_{\theta} :=\left(\begin{array}{cc}
\cos (\theta) & -\sin (\theta) \\
\sin (\theta) & \cos (\theta)
\end{array}\right).$$ 

\noindent As a smooth manifold, $\SE(2)$ is diffeomorphic to the semidirect product $\mathbb{R}^2 \rtimes \mathbb{S}^1$, and we use global coordinates $(\xi,\theta)$ with $\xi=(\xi^1,\xi^2)\in\mathbb{R}^2$ and $\theta\in\mathbb{S}^1$. The Lie algebra $\mathfrak{se}(2)$ is three-dimensional, and can be identified with the tangent space at the identity. In these coordinates, we consider the following invariant vector fields, see e.g. \cite[Sec. 3]{DuitsErik}:

\begin{equation}\label{vec_fields_SE(2)}
\begin{aligned}
X_1 &= \cos(\theta)\,\partial_{\xi^1} + \sin(\theta)\,\partial_{\xi^2}, \quad X_2= \partial_\theta.
\end{aligned}
\end{equation}

\noindent The remaining direction (translation) is generated through the commutator

$$[X_2, X_1] = - \sin(\theta) \partial_{\xi^1} + \cos(\theta) \partial_{\xi^2}=:X_3.$$

%%%%%%%%%%%%%%%%%%%%%%%%%%%%%%%%%%%%5
\section{Hypocoercive Langevin dynamics on $\SE(2)$}\label{sec:generator}

Consider the Hilbert space $\mathcal{H} := L^2(\SE(2),\mu_{\Phi})$, where $ (\xi,  \theta) \in \SE(2)= \mathbb{R}^2 \rtimes \mathbb{S}^1$, endowed with the probability measure

\begin{equation}\label{proba_measure_SE(2)}
d\mu_{\Phi} := \frac{1}{Z} e^{-\Phi(\xi)} d\xi \otimes \frac{1}{(2\pi)} d \theta, %=  \frac{1}{(2\pi Z)} e^{-\Phi(\xi)} d\xi \otimes  d \theta,  
\end{equation}

\noindent where the potential $\Phi: \mathbb{R}^2 \rightarrow \mathbb{R}$, and $Z= \int_{\mathbb{R}^2} e^{-\Phi(\xi)} d\xi$ is the normalization term in such a way that $\mu_{\Phi}(\SE(2) )=1$.

%\vspace{-0.7cm}
\subsection{Generator and operator decomposition}\label{subsec:generator_decomposition}

Let $D := C_c^\infty(\SE(2))$. For $\sigma \in (0, \infty)$, we define operators
$(\S,D)$ and $(\A,D)$ by

\begin{equation}
\begin{aligned}
%\begin{cases}
    \S&= - \frac{\sigma^2}{2} X_{2}^{*} X_2  %=  \frac{\sigma^2}{2} X_{2} X_{2} 
    = \frac{\sigma^2}{2} \Delta_{\theta} ,\\[6pt]
    \A %&= v(\theta) \cdot \nabla_{\xi} - (\nabla_\xi \Phi(\xi) \cdot v_{\perp}(\theta)) \cdot \partial_{\theta}\\
    & %= \cos(\theta) \partial_{\xi^1} + \sin(\theta) \partial_{\xi^2} - (\nabla_\xi \Phi(\xi) \cdot v_{\perp}(\theta)) \cdot \partial_{\theta} 
    = X_1 - (\nabla_\xi \Phi(\xi) \cdot v_{\perp}(\theta)) \cdot X_2,
%\end{cases}
\end{aligned}
\end{equation} where $v(\theta)= (\cos(\theta), \sin(\theta))$, $v_{\perp}(\theta):= \frac{\partial v(\theta)}{\partial \theta}= (-\sin(\theta), \cos(\theta))^{\top}$, and $\nabla_{\xi} = (\partial_{\xi^1}, \partial_{\xi^2})$. Let us note that the operator $\L:= \S- \A$ is hypoelliptic, since the vector fields appearing in $\mathcal L$ satisfy Hörmander's bracket condition: the missing direction $X_3$ is obtained through the commutator 
$[X_2, X_1]$.

\begin{remark} It was shown that the closure of $\L=\S-\A$ on $D$ is maximal dissipative \cite{GrotStil}. Here invariance with respect to $\mu_\Phi$ means $(\mathcal L f,1)_{\mathcal H}=0$ for all $f\in D$. Since $\Phi$ does not depend on $\theta$, the vector field $X_2=\partial_\theta$ is invariant under $\mu_\Phi$. In contrast, $X_1$ is not invariant with respect to the weighted measure $e^{-\Phi(\xi)}d\xi\otimes d\theta$. The drift term involving $\nabla_\xi\Phi$ in $\mathcal A$ precisely compensates this defect, and ensures invariance of the full generator.
\end{remark}

\noindent that the symmetric operator is given by $\S =  \frac{\sigma^2}{2} \Delta_{\theta}$, its kernel consists of functions that are independent of the angular variable 

\begin{equation}
\ker(\S)
= \{ f \in \Hi : f(\xi,\theta)=g(\xi) \, 
\text{ for some } g:\mathbb R^2 \to \mathbb R \}.
\end{equation}

\begin{remark}
The structure of $\ker(\mathcal S)$ shows the fundamental distinction between coercivity and hypocoercivity.\\

\noindent In the coercive case, the kernel of the symmetric part $\S$ consists only of constant functions and yields exponential convergence to equilibrium. In contrast, in the present hypocoercive setting, the symmetric part acts only on the rotational variable, and its kernel is the larger subspace of functions depending only on $\xi$. \\

\noindent Hence, $\S$ alone is not coercive on the full space. Convergence to equilibrium is obtained only through the interaction with the antisymmetric part $\A$, which transfers dissipation from the rotational direction to the translational variables via the underlying Lie group structure, encoded in the commutator relations between the invariant vector fields $X_1, X_2$ and $X_3$. 
\end{remark}

\begin{definition}[Projection map]
We denote by $\Pi_{\S}: \Hi \to \ker(\S)$ the orthogonal
projection onto this kernel, defined by averaging over the rotational variable

\begin{equation}\label{Orthogonal_Projection}
\Pi_{\S} f (\xi) := \frac{1}{2\pi} \int_{\mathbb{S}^1} f(\xi, \theta) \,  d \theta, \quad (\xi,\theta)\in\mathbb R^2\times \mathbb{S}^1. 
\end{equation}
\end{definition}

\noindent We also introduce the following orthogonal projection on $\Hi$, which removes the constant components, as follows 

\begin{equation}
\Pi := \Pi_{\mathcal S} - (\cdot,1)_{\mathcal H},
\end{equation}

\noindent This projection above is essential for the Poincaré-type inequalities needed in the Hypocoercivity conditions, and is used in the hypocoercive estimates.\\

%%%%%%%%%%%%%%%%%%%%%%%%%%%%%%%%%%%%%%%%%%%%%%%%%%%%%%%%%%%%%%%%%%%%%%%%

\noindent The aim of the following section is to illustrate how to verify the assumptions required by the abstract Hilbert space hypocoercivity framework developed in \cite{GrotStil}, and to establish hypocoercivity for the operator $\L$.

\subsection{Hypocoercivity conditions}\label{subsec:projection}

The operator $\L = \S - \A$ is degenerate: $\S$ acts only on the angular variable $\theta$, so $\ker(\S)$ consists of $\theta$-independent functions. Convergence to equilibrium occurs through coupling between $\S$ and $\A$ via the Lie algebra structure of $\mathfrak{se}(2)$.\\

\noindent We analyze the compositions $\A\Pi$, $\A^2\Pi$, 
$\Pi \A \Pi$, and $\Pi \mathcal A^2\Pi$ to make the coupling explicit. 
The operator $\A$ maps $\ker(\S)$ into 
$\theta$-dependent directions via $X_1$, while $\mathcal A^2$ exploits 
the commutator $[X_2,X_1]=X_3$ to access the remaining translational direction. 
The projected operators $\Pi \mathcal A\Pi$ and $\Pi \A^2\Pi$ 
describe the induced action of transport on $\ker(\S)$. The following propositions determine these operators explicitly, and can be found in \cite{GrotStil}.

\begin{proposition}\label{eq:AP_SE(2)}
Let $f\in D$, and set $g:=\Pi f$. Then

\begin{equation}\label{2.21_SE(2)}
\begin{aligned}
 \A \P f   =X_1 g. 
\end{aligned}
\end{equation}
\end{proposition}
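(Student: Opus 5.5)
The plan is to unwind the definitions of $\A$ and $\Pi$ and use that $g=\Pi f$ does not depend on $\theta$. First I will record that for $f\in D$ the function
\[
g=\Pi f=\Pi_{\S}f-(f,1)_{\Hi}, \qquad g(\xi)=\frac{1}{2\pi}\int_{\mathbb S^1} f(\xi,\theta)\,d\theta-(f,1)_{\Hi},
\]
is smooth in $\xi$. This holds because $f$ is smooth with compact support, so we may differentiate under the integral sign. The function $g$ is constant in $\theta$. It is not compactly supported, since it tends to the constant $-(f,1)_{\Hi}$ at infinity, so $g\notin D$ in general. I will therefore interpret $\A g$ as the differential expression defining $\A$ applied to the smooth function $g$. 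If one insists on working with the closure of $\A$, I would check that $g$ lies in its domain. To do this, approximate $g$ by $g\chi_n$ with cutoffs $\chi_n(\xi)$, and use that constants are annihilated by $\A$.

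The core step is then a direct computation. Using
\[
\A=X_1-(\nabla_\xi\Phi\cdot v_\perp(\theta))\,X_2
\]
and $X_2=\partial_\theta$, we have $X_2 g=\partial_\theta g=0$, because $g$ depends only on $\xi$. Hence the second term drops out, and
\[
\A\Pi f=X_1 g=\cos(\theta)\,\partial_{\xi^1}g+\sin(\theta)\,\partial_{\xi^2}g=v(\theta)\cdot\nabla_\xi g .
\]
The constant $(f,1)_{\Hi}$ contributes nothing, since $X_1$ annihilates constants.

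I expect the only genuine obstacle to be the domain issue just described: making sense of $\A$ on the non-compactly supported $g$. I would handle it by the cutoff argument. The functions $g\chi_n$ lie in $D$, and $\A(g\chi_n)=\chi_n X_1 g+g\,X_1\chi_n$. The term $g\,X_1\chi_n$ tends to $0$ in $\Hi$ provided $\nabla\chi_n$ is uniformly bounded and supported where $|\xi|\to\infty$, because $g$ is bounded. Alternatively, this part can be deferred to the domain framework of \cite{GrotStil}, as the paper indicates.
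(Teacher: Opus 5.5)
Your proposal is correct and follows the paper's proof: because $g=\Pi f$ does not depend on $\theta$, $X_2 g=0$ removes the drift term and leaves $X_1 g$. Your cutoff argument is a valid justification of a point the paper leaves implicit. It shows that $g$, which equals the constant $-(f,1)_{\Hi}$ outside a compact set and so is not in $D$, lies in the domain of the closure of $\A$, and that the closure acts on $g$ as $X_1 g$.
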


%%%%%%%%%%%%%%%%%%%%%%%%%%%%%%%%

\begin{proof}
Observe that for $f \in D$, the projection $\Pi f = g$ is independent of the rotational variable $\theta$. Applying the operator $\mathcal A$, we obtain

\begin{align}\label{AP}
\mathcal A \Pi f(\xi,\theta)
&=  \cos(\theta)\, \partial_{\xi_1} g(\xi)
   + \sin(\theta)\, \partial_{\xi_2} g(\xi)
   - (\nabla_{\xi} \Phi(\xi)\!\cdot\! v_{\perp}(\theta))\, X_2 g(\xi) \notag\\
&= \cos(\theta)\, \partial_{\xi_1} g(\xi)
   + \sin(\theta)\, \partial_{\xi_2} g(\xi),
\end{align}

\noindent since $X_2 g = \partial_\theta g = 0$.
\end{proof}

\noindent Thus, $\A$ moves functions from $\ker(\S)$ into $\theta$-dependent directions. Although $\S$ alone cannot control functions independent of $\theta$, the operator 
$\A$ introduces angular oscillations through the invariant vector field $X_1$. This reflects the geometric coupling inherent in the Lie group structure and constitutes the first step in the hypocoercive mechanism.

%%%%%%%%%%%%%%%%%%%%%%%%%%%%%%%%

\begin{remark}
Since the integrals of $\sin$ and $\cos$ over $(0,2\pi]$ vanish, we obtain
\begin{equation}
\Pi \A \Pi =0. 
\end{equation}
This cancellation reflects the symmetry of the rotational variable: the orthogonal projection $\Pi$ averages over the compact subgroup 
$\mathbb{S}^1$, so that first-order transport $\A$ effects vanish.
\end{remark}

%%%%%%%%%%%%%%%%%%%%%%%%%%%%%%%%%%%%%%%%%
\begin{proposition}
Let $f\in D,$ and set $g:= \Pi f$. Then

\begin{equation}\label{2.22_SE(2)}
\begin{aligned}
\A^2 \P f %& = \mathcal{A}\left(X_1 g\right)=X_1\left(X_1 g\right)-\left(\nabla_{\xi} \Phi \cdot v_{\perp}\right) X_2\left(X_1 g\right),\\
= & \cos^2 (\theta) \partial_{\xi^1 \xi^1} g+2 \sin(\theta) \cos(\theta) \partial_{\xi^1 \xi^2} g \\[4pt]
& +\sin^2( \theta) \partial_{\xi^2 \xi^2} g -  \left(\nabla_{\xi} \Phi(\xi) \cdot v_{\perp}(\theta)\right) X_3 g(\xi).
\end{aligned}
\end{equation}

\end{proposition}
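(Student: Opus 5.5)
The plan is to compute $\A^2\Pi f=\A(\A\Pi f)$ directly, starting from Proposition~\ref{eq:AP_SE(2)}. That result gives $\A\Pi f = X_1 g$ with $g=\Pi f$ independent of $\theta$. Hence
\[
\A^2\Pi f = \A(X_1 g) = X_1(X_1 g) - \bigl(\nabla_\xi\Phi(\xi)\cdot v_\perp(\theta)\bigr)\, X_2(X_1 g),
\]
and the two terms can be handled separately.

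\emph{First term.} Since $X_1=\cos(\theta)\partial_{\xi^1}+\sin(\theta)\partial_{\xi^2}$ has coefficients depending only on $\theta$, it commutes with $\xi$-derivatives of its coefficients trivially. Expanding $X_1(\cos(\theta)\partial_{\xi^1}g+\sin(\theta)\partial_{\xi^2}g)$ therefore involves no derivatives of the trigonometric factors. It gives $\cos^2(\theta)\partial_{\xi^1\xi^1}g+2\sin(\theta)\cos(\theta)\partial_{\xi^1\xi^2}g+\sin^2(\theta)\partial_{\xi^2\xi^2}g$, using symmetry of mixed partials for $g$. Here $g$ is smooth, since $f\in D$ and $\Pi f=\Pi_\S f-(f,1)_\Hi$, where averaging in $\theta$ preserves smoothness in $\xi$ and the constant term is killed by any derivative.

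\emph{Second term.} I will use the commutator relation from Section~\ref{subsec:se2_geometry_vecfields}. Since $X_2g=\partial_\theta g=0$, we have $X_2X_1g=[X_2,X_1]g+X_1X_2g=X_3g$. Alternatively, differentiating $\cos(\theta)\partial_{\xi^1}g+\sin(\theta)\partial_{\xi^2}g$ in $\theta$ directly gives $-\sin(\theta)\partial_{\xi^1}g+\cos(\theta)\partial_{\xi^2}g=X_3g$. Substituting yields the term $-(\nabla_\xi\Phi\cdot v_\perp)X_3g$ and hence the claimed formula.

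There is no real obstacle here. The only point needing care is that the operator identity $\A\Pi f=X_1g$ may legitimately be differentiated again. This requires that $g$ be $C^2$ (indeed smooth) in $\xi$, which follows from $f\in C_c^\infty$, and that $\A$ be applied as the differential expression rather than its closure. I will state this explicitly before expanding.
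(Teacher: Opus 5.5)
Your proposal is correct and matches the paper's proof: you split $\A(X_1 g)$ into $X_1(X_1 g)$ and the drift term, expand the first using that the coefficients of $X_1$ depend only on $\theta$, and identify $X_2(X_1 g)$ with $X_3 g$, either by direct $\theta$-differentiation or via $[X_2,X_1]$ and $X_2 g=0$. Your remark on the smoothness of $g=\Pi f$, which justifies applying $\A$ pointwise as a differential expression, is a harmless extra that the paper leaves implicit.
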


\begin{proof}
Since $g$ is independent of $\theta$, we have $X_2 g=\partial_\theta g=0$, and hence
$\mathcal A\Pi f = X_1 g$. Applying $\mathcal A$ once more yields
\begin{equation}\label{eq:AAP_split}
\mathcal A^2\Pi f
= \mathcal A(X_1 g)
= X_1(X_1 g) - \big(\nabla_\xi\Phi(\xi)\cdot v_\perp(\theta)\big)\,X_2(X_1 g).
\end{equation}

\noindent For the first term, using $X_1 = v(\theta)\cdot\nabla_\xi$, and that $v(\theta)$ depends only on $\theta$,

\begin{equation*}
\begin{aligned}
X_1(X_1 g)
&= (v(\theta)\cdot\nabla_\xi)\big(v(\theta)\cdot\nabla_\xi g\big)\\
&= \cos^2(\theta)\,\partial_{\xi^1\xi^1} g
  +2\sin(\theta)\cos(\theta)\,\partial_{\xi^1\xi^2} g
  +\sin^2(\theta)\,\partial_{\xi^2\xi^2} g.    
\end{aligned}
\end{equation*}

\noindent For the second term, since $X_2=\partial_\theta$ and $X_2 g=0$, we have

\begin{equation*}
X_2(X_1 g)
= \partial_\theta\big(v(\theta)\cdot\nabla_\xi g\big)
= (\partial_\theta v(\theta))\cdot\nabla_\xi g
= v_\perp(\theta)\cdot\nabla_\xi g
= X_3 g.
\end{equation*}

\noindent Equivalently, using $X_2 g=0$,
\begin{equation*}
X_2(X_1 g) = [X_2,X_1]g = X_3 g.
\end{equation*}

\noindent Combining the two parts proves \eqref{2.22_SE(2)}.
\end{proof}

\noindent The appearance of $X_3$ reflects the geometry of $\SE(2)$: since $X_2=\partial_\theta$ generates rotations and $X_1$ depends on the orientation $\theta$, differentiating $X_1$ in the rotational direction produces the orthogonal translational direction $X_3=[X_2,X_1]$. The following proposition can also be found in \cite{GrotStil}

\begin{proposition} The symmetric operator $G:=\Pi \A^2 \Pi$ on $D$ is given by

\begin{equation}
\begin{aligned}
G:= \Pi \A^2 \Pi&=  \frac{1}{2}\Delta_{\xi}- \frac{1}{2}\nabla_{\xi}\Phi(\xi)  \cdot \nabla_{\xi}. 
\end{aligned}
\end{equation}

\end{proposition}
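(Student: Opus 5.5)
The plan is to start from the explicit formula for $\A^2\Pi f$ in the preceding proposition and apply the angular average $\Pi_{\S}$ to it. Then I would check separately that subtracting the constant part $(\cdot,1)_{\Hi}$ changes nothing. Fix $f\in D$ and put $g=\Pi f$, so $g$ depends only on $\xi$. By \eqref{2.22_SE(2)}, $\A^2\Pi f$ is a sum of terms of the form (trigonometric polynomial in $\theta$) times (function of $\xi$ built from $g$ and $\Phi$). Averaging over $\mathbb{S}^1$ therefore reduces to computing the elementary integrals
\[
\frac{1}{2\pi}\int_0^{2\pi}\cos^2\theta\,d\theta=\frac{1}{2\pi}\int_0^{2\pi}\sin^2\theta\,d\theta=\frac12,\qquad \frac{1}{2\pi}\int_0^{2\pi}\sin\theta\cos\theta\,d\theta=0 .
\]

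\textbf{Second-order part.} Applying these integrals to the second-order terms gives $\frac12(\partial_{\xi^1\xi^1}g+\partial_{\xi^2\xi^2}g)=\frac12\Delta_\xi g$; the mixed term drops out.

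\textbf{Drift part.} I would expand
\[
-(\nabla_\xi\Phi\cdot v_\perp)\,X_3 g=-(v_\perp\cdot\nabla_\xi\Phi)(v_\perp\cdot\nabla_\xi g)=-\sum_{i,j}(v_\perp)_i(v_\perp)_j\,\partial_{\xi^i}\Phi\,\partial_{\xi^j}g .
\]
The angular average of $v_\perp v_\perp^{\top}$, with $v_\perp=(-\sin\theta,\cos\theta)$, is $\frac12 I$ by the same integrals. So this term averages to $-\frac12\nabla_\xi\Phi\cdot\nabla_\xi g$. Hence
\[
\Pi_{\S}\A^2\Pi f=\tfrac12\Delta_\xi g-\tfrac12\nabla_\xi\Phi\cdot\nabla_\xi g .
\]

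\textbf{Constant part.} It remains to show that $(\A^2\Pi f,1)_{\Hi}=0$. I expect to get this from antisymmetry of $\A$ on $D$ with respect to $\mu_\Phi$ (the invariance remark) together with $\A 1=0$:
\[
(\A^2\Pi f,1)_{\Hi}=-(\A\Pi f,\A1)_{\Hi}=0 .
\]
Alternatively, one can integrate $\frac12\Delta_\xi g-\frac12\nabla\Phi\cdot\nabla g=\frac12 e^{\Phi}\nabla\cdot(e^{-\Phi}\nabla g)$ against $e^{-\Phi}d\xi$ and observe that it is a divergence. Finally, since $\Delta_\xi$ and $\nabla_\xi$ kill constants, the operator $\frac12\Delta_\xi-\frac12\nabla_\xi\Phi\cdot\nabla_\xi$ applied to $g$ agrees with applying it after $\Pi$, which yields the stated identity $G=\Pi\A^2\Pi$.

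\textbf{Main obstacle.} The main technical point is justifying the integration by parts, since $g=\Pi f$ need not be compactly supported: it carries a constant shift. Here I would use that $\Pi_{\S}f\in C_c^\infty(\mathbb{R}^2)$ and that derivatives annihilate the constant. I would also assume enough regularity on $\Phi$ (say $\Phi\in C^\infty$, or at least locally Lipschitz) for $\A$ to map $D$ into $\Hi$. Symmetry of $G$ then follows from the divergence form $\frac12 e^{\Phi}\nabla\cdot(e^{-\Phi}\nabla\,\cdot)$.
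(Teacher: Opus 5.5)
Your proposal is correct and follows the paper's route: you average the explicit formula for $\A^2\Pi f$ over $\theta$ using $\frac{1}{2\pi}\int_0^{2\pi}\cos^2\theta\,d\theta=\frac{1}{2\pi}\int_0^{2\pi}\sin^2\theta\,d\theta=\frac12$ and $\frac{1}{2\pi}\int_0^{2\pi}\sin\theta\cos\theta\,d\theta=0$, which yields $\frac12\Delta_\xi g$ from the second-order part and $-\frac12\nabla_\xi\Phi\cdot\nabla_\xi g$ from the drift part. Your explicit check that $(\A^2\Pi f,1)_{\Hi}=0$ (by invariance of $\mu_\Phi$, using that $\A\Pi f=X_1\Pi_{\S}f$ is compactly supported, or by the divergence form $\frac12 e^{\Phi}\nabla_\xi\cdot(e^{-\Phi}\nabla_\xi g)$) supplies a step the paper only asserts: it infers $\Pi h=\Pi_{\S}h$ from $\Pi f$ being mean-zero, and that inference does not by itself follow.
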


\begin{proof}
Let $f\in D$, and set $g:= \Pi f$. Note that $\Pi f$ is mean-zero, i.e.
$(\Pi f,1)_{\Hi}=0$, and that $\Pi$ differs from $\Pi_\S$ only by the
projection onto constants. Hence, in the computation below, applying $\Pi$ coincides with averaging over $\theta$ for the expressions considered below, i.e. $\Pi h = \Pi_S h$ for all expressions $h$ arising from $\A^2\Pi f$.

\begin{align}
\Pi \mathcal A^2 \Pi f
&= \frac{1}{2}\,\partial_{\xi^1\xi^1} g(\xi)
   - \frac{1}{2}\,(\partial_{\xi^1}\Phi(\xi))(\partial_{\xi^1} g(\xi)) \notag\\
&\quad + \frac{1}{2}\,\partial_{\xi^2\xi^2} g(\xi)
   - \frac{1}{2}\,(\partial_{\xi^2}\Phi(\xi))(\partial_{\xi^2} g(\xi)),
\end{align}
where we used that
$$
\frac{1}{2\pi}\int_0^{2\pi}\cos^2(\theta)\,d\theta
=\frac{1}{2\pi}\int_0^{2\pi}\sin^2(\theta)\,d\theta=\frac12,
\qquad
\frac{1}{2\pi}\int_0^{2\pi}\sin(\theta)\cos(\theta)\,d\theta=0.
$$
Putting together these terms gives
$$
\Pi \mathcal A^2 \Pi f
= \frac{1}{2}\Delta_{\xi} g(\xi)
  - \frac{1}{2}\nabla_{\xi}\Phi(\xi)\cdot\nabla_{\xi} g(\xi),
$$
which proves the claim.
\end{proof}

\begin{remark}
In particular, averaging over the rotational subgroup yields an elliptic operator in the translation variables; that is, we can observe the Laplacian operator in the variable $\xi$. 
\end{remark}

%%%%%%%%%%%%%%%%%%%%%%%%%%%%%%%%%%%%%%%%%%%%%%%%%%%%%%%%%%%%%%%%%
\subsection{Hypocoercivity assumptions}\label{subsec:hypocoercivity-conditions}

Under the following assumptions in \cite{GrotStil}, the abstract hypocoercivity assumptions were shown:

\begin{itemize}\label{Hypocoercivity_Conditions}

\item[(\textbf{C1})]\label{cond:C1_SE(2)} \,\,\,\, The potential $\Phi: \mathbb{R}^2 \rightarrow \mathbb{R}$ is bounded from below, $\Phi \in C^{2}(\mathbb{R}^2)$ and $e^{-\Phi} d\xi$ is a probability measure on $(\mathbb{R}^2, \mathcal{B}(\mathbb{R}^2))$. \\
    
\item[(\textbf{C2})]\label{cond:C2_SE(2)} \,\,\,\,  The probability measure $e^{-\Phi} d\xi$ satisfies the Poincaré inequality for $\Lambda >0$

\begin{equation}
\|\nabla_{\xi} f\|_{L^2\left(e^{-\Phi} \mathrm{d} \xi \right)}^2 \geq \Lambda\left\|f-(f, 1)_{L^2\left(e^{-\Phi} \mathrm{d} \xi \right)}\right\|_{L^2\left(e^{-\Phi} \mathrm{d} \xi \right)}^2.
\end{equation}

\item[(\textbf{C3})]\label{cond:C3_SE(2)} \,\,\,\ There exists a constant $c< \infty$ such that

\begin{equation*}
|\Delta \Phi(\xi)| \leq c (1+ |\nabla \Phi(\xi)|) \quad \text{for all} \quad \xi \in \mathbb{R}^2.    
\end{equation*}

\end{itemize}

For the convenience of the reader, here we recall the main ideas.

\begin{proposition}[Microscopic Coercivity]
Let $\Phi: \mathbb{R}^2 \rightarrow \mathbb{R}$ a function that satisfies \emph{(C1)} and \emph{(C2)}, and let $\sigma \in (0, \infty)$. Then, the following condition is satisfied, 

\begin{equation}\label{Condition_H2}
-(\S f,f)_{\Hi}
\geq \Lambda_m \|(I-\Pi_{\mathcal S})f\|_{\mathcal H}^2,
\qquad \forall f\in D,
\end{equation}
with $\Lambda_m=\frac12\sigma^2$.
\end{proposition}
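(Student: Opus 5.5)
The plan is to reduce the claim to the classical Poincaré (Wirtinger) inequality on the circle $\mathbb{S}^1$ with the normalized measure $\frac{1}{2\pi}d\theta$, applied fibrewise for each fixed $\xi\in\mathbb{R}^2$, and then integrate against $\frac1Z e^{-\Phi(\xi)}d\xi$. Conditions (C1) and (C2) enter only to guarantee that $\mu_\Phi$ is a well-defined product probability measure, so that Fubini applies and $\Pi_\S$ is the orthogonal projection given by \eqref{Orthogonal_Projection}. The Poincaré inequality in $\xi$ is not needed here, since it is the macroscopic ingredient.

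\textbf{Step 1: rewrite the left-hand side.} For $f\in D$, since $\Phi$ does not depend on $\theta$ and $f$ is smooth and $2\pi$-periodic in $\theta$, integration by parts in $\theta$ produces no boundary terms. This gives
\begin{equation*}
-(\S f,f)_{\Hi}=\frac{\sigma^2}{2}\int_{\mathbb{R}^2}\frac{1}{2\pi}\int_0^{2\pi}|\partial_\theta f(\xi,\theta)|^2\,d\theta\,\frac1Z e^{-\Phi(\xi)}d\xi=\frac{\sigma^2}{2}\|X_2 f\|_{\Hi}^2 .
\end{equation*}

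\textbf{Step 2: fibrewise Poincaré inequality.} Fix $\xi$ and set $h(\theta):=f(\xi,\theta)$, a smooth periodic function. Expand $h$ in its Fourier series $h(\theta)=\sum_{k\in\mathbb{Z}}\hat h_k e^{ik\theta}$. Then $h-\Pi_\S f(\xi)=\sum_{k\neq0}\hat h_k e^{ik\theta}$ and $\partial_\theta h=\sum_k ik\hat h_k e^{ik\theta}$. Parseval's identity with $k^2\ge1$ for $k\neq0$ yields
\begin{equation*}
\frac{1}{2\pi}\int_0^{2\pi}|\partial_\theta h|^2\,d\theta\;\ge\;\frac{1}{2\pi}\int_0^{2\pi}|h-\Pi_\S f(\xi)|^2\,d\theta .
\end{equation*}
The constant $1$ is sharp: it is the spectral gap of $-\Delta_\theta$ on $\mathbb{S}^1$.

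\textbf{Step 3: integrate in $\xi$.} Integrating the fibrewise inequality against $\frac1Z e^{-\Phi}d\xi$ and using Fubini gives $\|X_2f\|_{\Hi}^2\ge\|(I-\Pi_\S)f\|_{\Hi}^2$. Multiplying by $\sigma^2/2$ gives \eqref{Condition_H2} with $\Lambda_m=\frac12\sigma^2$.

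The main obstacle is only bookkeeping. One must check that $\Pi_\S f$ computed pointwise in $\xi$ is exactly the orthogonal projection in $\Hi$ onto $\ker(\S)$, and that the boundary terms vanish. Both hold because the measure is a product with a uniform $\theta$-marginal, and because $f$ has compact support in $\xi$ and is periodic in $\theta$.
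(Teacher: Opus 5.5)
Your proof is correct and follows the paper's argument: integrate by parts in $\theta$ to obtain $\frac{\sigma^2}{2}\|\partial_\theta f\|_{\Hi}^2$, apply the Poincaré inequality on $\mathbb{S}^1$ for each fixed $\xi$, and integrate in $\xi$. The only differences are that you prove the circle inequality directly via Fourier series and Parseval where the paper cites it, and you correctly note that (C2) plays no role in this step.
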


\begin{proof}

By the definition of the symmetric part of the operator,

\begin{equation*}
-(\mathcal{S} f, f)_{\mathcal{H}}=-\frac{\sigma^2}{2}\left(\partial_\theta^2 f, f\right)_{\mathcal{H}}=\frac{\sigma^2}{2}\left\|\partial_\theta f\right\|_{\mathcal{H}}^2.
\end{equation*}

Since $\Pi_{\S}$ is the orthogonal projection onto functions independent
of $\theta$, the Poincaré inequality on $\mathbb{S}^1$
(see \cite{beckner1989generalized}) yields

$$
\|\partial_\theta f\|_{L^2(\mathbb S^1)}^2
\ge \|f-\Pi_{\mathcal S}f\|_{L^2(\mathbb S^1)}^2.
$$
Integrating with respect to $\xi\in\mathbb R^2$ proves
\eqref{Condition_H2} with $\Lambda_m=\frac12\sigma^2$.
\end{proof}

\begin{remark}
The constant $1$ reflects the spectral gap of the Laplacian on the circle
$\mathbb{S}^1$.
\end{remark}

%%%%%%%%%%%%%%%%%%%%%%%%%%%%%%%%%%%%%%%%%%%%%%%%%%%%%%%%%%%%%%%%%

\begin{proposition}[Macroscopic coercivity] Let $\Phi:\mathbb{R}^2 \rightarrow \mathbb{R}$ satisfies \emph{(C1)} and \emph{(C2)}. Then the following condition holds

\begin{equation}
\|\A \Pi f\|^2 \geq \Lambda_M\|\Pi f\|^2 \quad \text { for all } f \in D\left(\A \P\right) .
\end{equation}

for $\Lambda_M= \frac{\Lambda}{2}$. 
\end{proposition}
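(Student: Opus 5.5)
By Proposition \ref{eq:AP_SE(2)}, for $f\in D$ with $g:=\Pi f$ we have $\A\Pi f = X_1 g = v(\theta)\cdot\nabla_\xi g(\xi)$. The plan is to compute $\|\A\Pi f\|_{\mathcal H}^2$ explicitly by first integrating out $\theta$, and then to apply the Poincaré inequality (C2) to $g$. The key structural facts are that $g$ does not depend on $\theta$ and that $g$ has mean zero, since $\Pi$ removes the constant component.

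First I would expand the square pointwise:
\begin{equation*}
|X_1 g|^2 = \cos^2(\theta)(\partial_{\xi^1} g)^2 + 2\sin(\theta)\cos(\theta)\,\partial_{\xi^1}g\,\partial_{\xi^2}g + \sin^2(\theta)(\partial_{\xi^2} g)^2 .
\end{equation*}
Integrating against $\frac{1}{2\pi}d\theta$ with the averages already used for $G=\Pi\A^2\Pi$ (namely $\frac12$, $0$, $\frac12$) and applying Fubini gives
\begin{equation*}
\|\A\Pi f\|_{\mathcal H}^2 = \tfrac12\,\|\nabla_\xi g\|^2_{L^2(e^{-\Phi}d\xi)} .
\end{equation*}
Here the weight normalization is such that $\frac1Z e^{-\Phi}d\xi$ coincides with $e^{-\Phi}d\xi$ under (C1).

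Next, $(g,1)_{L^2(e^{-\Phi}d\xi)} = (\Pi f,1)_{\mathcal H} = 0$, because $\Pi=\Pi_\S-(\cdot,1)_{\mathcal H}$ and $(\Pi_\S f,1)_{\mathcal H}=(f,1)_{\mathcal H}$. Hence (C2) yields
\begin{equation*}
\|\nabla_\xi g\|^2 \ge \Lambda\|g\|^2_{L^2(e^{-\Phi}d\xi)} = \Lambda\|\Pi f\|_{\mathcal H}^2 ,
\end{equation*}
where the last equality holds again by $\theta$-independence of $g$ and the normalization of $d\theta/2\pi$. Combining the two displays gives $\|\A\Pi f\|^2\ge \frac{\Lambda}{2}\|\Pi f\|^2$ for $f\in D$.

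The main obstacle is the extension from $D$ to all $f\in D(\A\Pi)$. Here $g=\Pi f$ is not compactly supported, although it is smooth in $\xi$ whenever $f\in D$, and (C2) must apply to it. For $f\in D$ this is harmless: $\Pi_\S f\in C_c^\infty(\mathbb R^2)$, so $g$ is a compactly supported smooth function minus a constant, and (C2) is invariant under adding constants. For general $f$ in the domain of the closure of $\A\Pi$, I would approximate by $f_n\in D$ with $\Pi f_n\to\Pi f$ and $\A\Pi f_n\to\A\Pi f$ in $\mathcal H$, using that $D$ (or $\Pi D$) is a core for $\A\Pi$ as established in \cite{GrotStil}. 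Both sides of the inequality are continuous along such a sequence, so the bound passes to the limit.
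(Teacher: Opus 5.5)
Your proposal is correct and follows the paper's proof. Both compute $\|\A\Pi f\|_{\Hi}^2=\frac12\|\nabla_\xi g\|^2_{L^2(e^{-\Phi}d\xi)}$ by averaging $\cos^2$, $\sin^2$ and $\sin\cos$ over $\theta$, then apply the Poincaré inequality (C2), and finally pass from $D$ to $D(\A\Pi)$ using the core property from Grothaus--Stilgenbauer. Your use of $(g,1)_{L^2(e^{-\Phi}d\xi)}=0$ for $g=\Pi f$ states the mean-zero step more accurately than the paper's identity $(g,1)=(f,1)_{\Hi}$, which really holds for $\Pi_{\S}f$; both lead to the same bound.
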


\begin{proof}
Let $f\in D$, and set $g:= \Pi f$. From Proposition \ref{eq:AP_SE(2)}, 

\begin{align*}
\|\A\Pi f\|_{\Hi}^2
&= \frac{1}{2\pi Z}\int_{\mathbb R^2\times\mathbb S^1}
\big(\cos\theta\,\partial_{\xi^1} g
+ \sin\theta\,\partial_{\xi^2} g\big)^2
\,e^{-\Phi(\xi)}\,d\xi\,d\theta \\
&= \frac{1}{2Z}\int_{\mathbb R^2}
|\nabla_\xi g(\xi)|^2\,e^{-\Phi(\xi)}\,d\xi,
\end{align*}

where we used

\begin{equation*}
\frac{1}{2\pi}\int_0^{2\pi}\cos^2\theta\,d\theta
=\frac{1}{2\pi}\int_0^{2\pi}\sin^2\theta\,d\theta=\frac12,
\qquad
\frac{1}{2\pi}\int_0^{2\pi}\sin\theta\cos\theta\,d\theta=0.
\end{equation*}

\noindent By assumption (C2), the probability measure
$Z^{-1}e^{-\Phi(\xi)}\,d\xi$ satisfies a Poincaré inequality with constant
$\Lambda>0$, so that
\begin{equation*}
\int_{\mathbb R^2}|\nabla_\xi g|^2 e^{-\Phi}\,d\xi
\ge \Lambda
\int_{\mathbb R^2}|g-(g,1)_{L^2(e^{-\Phi}d\xi)}|^2 e^{-\Phi}\,d\xi.    
\end{equation*}
Since $\Pi f=g$ and $(g,1)_{L^2(e^{-\Phi}d\xi)}=(f,1)_{\Hi}$, this yields
\begin{equation*}
\|\A\Pi f\|_{\Hi}^2
\geq \frac{\Lambda}{2}\|\Pi f\|_{\Hi}^2,    
\end{equation*}

\noindent which proves the claim, because $D$ is a core of $(\A \Pi,  D(\A \Pi))$.
\end{proof}

%%%%%%%%%%%%%%%%%%%%%%%%%%%%%%%%%%%%%%%%%%%%
\noindent The remaining assumption to prove the hypocoercivity theorem consists of elliptic a priori estimates as developed in \cite{dolbeault2015hypocoercivity}, involving the operator $$B:= (I + (\A\Pi)^{*} (\A\Pi))^{-1}(\A\Pi)^{*}.$$ These bounds ensure that dissipation from $\S$ and transport from $\A$ jointly control the $L^2$ norm. The proof relies on elliptic regularity for $G = \Pi \A^2 \Pi$, and all details of the corresponding estimates in this setting can be found in \cite{GrotStil}.\\

\begin{proposition}[Elliptic a priori estimates]
Assume that $\Phi: \mathbb{R}^2 \rightarrow \mathbb{R}$ satisfies \emph{(C1)}, \emph{(C2)} and \emph{(C3)}, $\sigma \in (0, \infty)$, and $f\in D$. Then for all
$f\in D$, the following estimates hold:

\begin{equation*}
    \begin{aligned}
        \|BSf\| \leq c_1 \|(I-\Pi_{\#})f\|, \qquad \|BA(I-\P)f\| \leq c_2 \|(I-\Pi_{\#})f\|, 
    \end{aligned}
\end{equation*}

\noindent where $\Pi_{\#}$ is either $\P_{\S}$ or $\P$, and the constanst are given by $c_1 = \frac{\sigma^2}{4}$, and $c_{2} \in (0, \infty)$.

\end{proposition}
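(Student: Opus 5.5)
The plan follows the Dolbeault--Mouhot--Schmeiser / Grothaus--Stilgenbauer strategy and uses duality. Both $BS$ and $BA(I-\Pi)$ are bounded by bounding their adjoints. Since $B = (I+(\A\Pi)^*(\A\Pi))^{-1}(\A\Pi)^*$ and $\Pi \A\Pi=0$, we have $(\A\Pi)^*=-\Pi\A$ on $D$. Moreover $(\A\Pi)^*(\A\Pi) = -\Pi\A^2\Pi = -G$. Hence
\begin{equation*}
B = -(I-G)^{-1}\Pi\A, \qquad B^* = -\A\Pi (I-G)^{-1}.
\end{equation*}
The first step is to solve $(I-G)h=\Pi g$ for $g\in\Hi$, with $h\in\ker(\S)$ depending only on $\xi$. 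Then $B^*g=-X_1h$ by Proposition~\ref{eq:AP_SE(2)}. Both estimates then reduce to bounds on $\S X_1 h$ and $\A X_1h$ in terms of $\|g\|$.

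\textbf{First estimate.} Since $X_1h=\cos\theta\,\partial_{\xi^1}h+\sin\theta\,\partial_{\xi^2}h$, we get $\partial_\theta^2 X_1h=-X_1h$. Hence $\S\A\Pi h' = -\frac{\sigma^2}{2}\A\Pi h'$, i.e. $\S(\A\Pi)=-\frac{\sigma^2}{2}\A\Pi$ on the relevant domain. Taking adjoints, $(\A\Pi)^*\S=-\frac{\sigma^2}{2}(\A\Pi)^*$, so
\begin{equation*}
BSf=-\tfrac{\sigma^2}{2}Bf = -\tfrac{\sigma^2}{2}B(I-\Pi_{\#})f.
\end{equation*}
The last equality holds because $B$ vanishes on $\ker(\S)$: $(\A\Pi)^*=-\Pi\A$ kills $\theta$-independent functions, since $\Pi X_1 g'=0$. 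The standard bound $\|B\|\le \frac12$ follows from $\|(I+T^*T)^{-1}T^*\|\le\frac12$, by the spectral calculus applied to $t/(1+t^2)$. This gives $c_1=\frac{\sigma^2}{4}$.

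\textbf{Second estimate.} Here $(BA(I-\Pi))^* = -(I-\Pi)\A B^*$ on a core, so it suffices to show $\|\A X_1h\|\le c\|\Pi g\|$. By the computation of $\A^2\Pi$ in the second proposition, $\A X_1 h$ consists of second derivatives $v\otimes v:\nabla^2_\xi h$ together with the term $(\nabla\Phi\cdot v_\perp)X_3h$. Its norm is thus controlled by $\|\nabla_\xi^2 h\|_{L^2(e^{-\Phi})}$ and $\||\nabla\Phi||\nabla h|\|_{L^2(e^{-\Phi})}$. The elliptic equation $h-\frac12(\Delta h-\nabla\Phi\cdot\nabla h)=\Pi g$ immediately gives $\|h\|+\|\nabla h\|\lesssim\|\Pi g\|$ by testing against $h$.

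\textbf{Main obstacle.} The key difficulty is the second-order estimate. I would test the equation against $L_\Phi h=\Delta h-\nabla\Phi\cdot\nabla h$ and use the Bochner-type identity
\begin{equation*}
\int (L_\Phi h)^2e^{-\Phi}=\int\big(|\nabla^2h|^2+\nabla^2\Phi(\nabla h,\nabla h)\big)e^{-\Phi}.
\end{equation*}
The Hessian term is not sign-controlled, so instead I would follow the Dolbeault--Mouhot--Schmeiser/Villani route used in \cite{GrotStil}. That route bounds $\int|\nabla\Phi|^2|\nabla h|^2e^{-\Phi}$ by integrating by parts with weight $|\nabla\Phi|^2$, using (C3) to absorb $\Delta\Phi$ by $c(1+|\nabla\Phi|)$, and closes with Young's inequality. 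This yields $\|\nabla^2 h\|+\||\nabla\Phi|\nabla h\|\le C\|\Pi g\|$. The result is $c_2<\infty$ depending on $c$ from (C3) and on $\Lambda$. Finally, I would replace $(I-\Pi)$ by $(I-\Pi_{\#})$ on the right-hand side using $\Pi A\Pi=0$.
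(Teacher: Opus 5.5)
Your overall route is the one the paper delegates to \cite{GrotStil}, and your first estimate is correct. Indeed $\A\Pi(D)\subset D$, and the relation $\partial_\theta^2X_1h=-X_1h$ gives $B\S=-\frac{\sigma^2}{2}B$ on $D$. Moreover $B$ vanishes on $\ker(\S)$ and $\|B\|\le\frac12$, so $c_1=\frac{\sigma^2}{4}$. There are two harmless slips. First, $B^*=+\A\Pi(I-G)^{-1}$. Second, the passage from $(I-\Pi)$ to $(I-\Pi_{\#})$ in the second estimate rests on $\A 1=0$, $(I-\Pi)(I-\Pi_{\S})=I-\Pi_{\S}$ and $\|(I-\Pi_{\S})f\|\le\|(I-\Pi)f\|$, not on $\Pi\A\Pi=0$. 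The genuine gap is the second-order bound $\|\nabla_\xi^2h\|\le C\|\Pi g\|$. You correctly single it out as the main obstacle, but you do not establish it.

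The weighted integration by parts you invoke, applied to $w=\partial_{\xi^i}h$ and summed, reads
\begin{equation*}
\int|\nabla\Phi|^2|\nabla h|^2e^{-\Phi}\,d\xi=\int\Delta\Phi\,|\nabla h|^2e^{-\Phi}\,d\xi+2\int\nabla^2h(\nabla h,\nabla\Phi)\,e^{-\Phi}\,d\xi .
\end{equation*}
After (C3) and Young it gives only $\||\nabla\Phi|\nabla h\|\le C(\|\nabla h\|+\|\nabla^2h\|)$. It consumes an $H^2$ bound on $h$ instead of producing one, so your conclusion $\|\nabla^2h\|+\||\nabla\Phi|\nabla h\|\le C\|\Pi g\|$ does not follow.

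The $H^2$ bound must come from the Bochner identity you set aside. There the remainder $\int\nabla^2\Phi(\nabla h,\nabla h)e^{-\Phi}d\xi$ has to be controlled in size, not in sign. In the Dolbeault--Mouhot--Schmeiser/Villani regularity lemma underlying \cite{GrotStil}, this uses the Hessian growth bound $|\nabla^2\Phi|\le c(1+|\nabla\Phi|)$. By the weighted bound, the remainder is then at most $c\|\nabla h\|^2+c\|\nabla h\|\,\||\nabla\Phi|\nabla h\|\le\varepsilon\|\nabla^2h\|^2+C_\varepsilon\|\nabla h\|^2$. The estimate $\|\nabla^2h\|^2\le\|L_\Phi h\|^2+\varepsilon\|\nabla^2h\|^2+C_\varepsilon\|\nabla h\|^2$ then closes, since $\|L_\Phi h\|=2\|h-\Pi g\|\le4\|\Pi g\|$.

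With (C3) read literally as a bound on the trace $\Delta\Phi$ only, your two tools do not close. Rewriting the remainder by parts and inserting the weighted identity gives $\|\nabla^2h\|^2=\frac12\||\nabla\Phi|\nabla h\|^2+R_1$. The weighted identity itself gives $\||\nabla\Phi|\nabla h\|^2\le2\|\nabla^2h\|\,\||\nabla\Phi|\nabla h\|+R_2$. Here $R_1,R_2$ are at most linear in $\||\nabla\Phi|\nabla h\|$, so both relations hold with the two quantities arbitrarily large. You therefore need the Hessian form of (C3), as in the cited regularity results, or a genuinely different argument for the $H^2$ estimate.
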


%%%%%%%%%%%%%%%%%%%%%%%%%%%%%%%%%%%%%%%%%%%%%%%
\noindent We are now in a position to state the hypocoercivity theorem, 
formulated in the Kolmogorov backward setting in \cite{GrotStil}. For the general concept, mainly applied in the Fokker-Planck setting, we refer to \cite{dolbeault2015hypocoercivity}.

\begin{theorem}
Assume that the operator $(\L=\S-\A,D)$ satisfies the structural assumptions and coercivity conditions of the abstract Hilbert space hypocoercivity framework. Then there exist constants $\kappa_1, \kappa_2 \in (0, \infty)$ which depend on $\Lambda_m, \Lambda_M, c_1$ and $c_2$ such that for all $g \in \Hi$, the associated semigroup $(T_t)_{t\ge0}$ satisfies

\begin{equation*}\label{eq:exponential_convergence_to_equilibrium}
    \| T_t g - (g,1)_\Hi\| \leq \kappa_1 e^{-\kappa_2 t} \|g - (g,1)_\Hi\| \quad \text{for all } \quad t \geq 0. 
\end{equation*}
\end{theorem}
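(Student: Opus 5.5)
The plan is the standard Dolbeault--Mouhot--Schmeiser argument in its Kolmogorov backward form from \cite{GrotStil}, fed with the inputs established above: microscopic coercivity with $\Lambda_m=\frac12\sigma^2$, macroscopic coercivity with $\Lambda_M=\frac{\Lambda}{2}$, the identity $\Pi\A\Pi=0$, and the elliptic a priori bounds with $c_1,c_2$.

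\textbf{Step 1: reduction.} Since $\L$ annihilates constants and $(\L f,1)_{\Hi}=0$, the semigroup preserves the mean. So it suffices to treat $g$ with $(g,1)_\Hi=0$ and show $\|T_tg\|\le\kappa_1e^{-\kappa_2t}\|g\|$. The closure of $(\L,D)$ is maximal dissipative and $D$ is a core, so by density and contractivity of $T_t$ it is enough to prove a differential inequality for $f\in D$ and then extend.

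\textbf{Step 2: modified entropy.} Take
\[
H_\varepsilon[f]=\tfrac12\|f\|^2+\varepsilon\,(Bf,f),\qquad B=(I+(\A\Pi)^*(\A\Pi))^{-1}(\A\Pi)^*,
\]
with $\varepsilon\in(0,1)$ small. One checks $\|Bf\|\le\frac12\|(I-\Pi)f\|$ and $\|\A Bf\|\le\|(I-\Pi)f\|$. Hence $H_\varepsilon$ is equivalent to $\frac12\|f\|^2$, with constants $\frac{1\pm\varepsilon}{2}$.

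\textbf{Step 3: entropy dissipation.} Compute $-\frac{d}{dt}H_\varepsilon[T_tf]=D_\varepsilon[f]$ with
\[
D_\varepsilon[f]=-(\S f,f)+\varepsilon(B\A\Pi f,f)+\varepsilon(B\A(I-\Pi)f,f)-\varepsilon(\A Bf,f)-\varepsilon(B\S f,f).
\]
The terms are bounded as follows.
\begin{itemize}
\item The first term is $\ge\Lambda_m\|(I-\Pi_\S)f\|^2$ by microscopic coercivity. Since $\|(I-\Pi)f\|^2=\|(I-\Pi_\S)f\|^2$ for mean-zero $f$, it equals $\ge\Lambda_m\|(I-\Pi)f\|^2$.
\item For the second term, $B\A\Pi=(I+(\A\Pi)^*\A\Pi)^{-1}(\A\Pi)^*(\A\Pi)$. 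Macroscopic coercivity and spectral calculus give $(B\A\Pi f,f)\ge\frac{\Lambda_M}{1+\Lambda_M}\|\Pi f\|^2$.
\item The remaining three terms are bounded by $(c_1+c_2+1)\|(I-\Pi)f\|\,\|f\|$, using the elliptic a priori estimates and Step 2. Here $\Pi\A\Pi=0$ is used to identify $B=\Pi B$.
\end{itemize}
Applying Young's inequality and choosing $\varepsilon$ small in terms of $\Lambda_m,\Lambda_M,c_1,c_2$ yields $D_\varepsilon[f]\ge\kappa\|f\|^2\ge\frac{2\kappa}{1+\varepsilon}H_\varepsilon[f]$.

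\textbf{Step 4: conclusion.} Gronwall's inequality and the norm equivalence give the claim with $\kappa_1=\sqrt{\frac{1+\varepsilon}{1-\varepsilon}}$ and $\kappa_2=\frac{\kappa}{1+\varepsilon}$.

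\textbf{Main obstacle.} The delicate step is justifying the differentiation of $H_\varepsilon[T_tf]$. This requires $T_tf\in D(\L)$ and requires the bounds of Step 3, proved on $D$, to extend to $D(\L)$. I would handle this through the core property: show that $B$ and $\A B$ are bounded operators and that $B\S$, $B\A(I-\Pi)$ extend boundedly, which is exactly what the elliptic estimates with (C3) provide. The dissipation inequality then holds on $D(\L)$ by closure.
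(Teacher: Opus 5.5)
Your proposal is correct and takes essentially the same route as the paper. The paper gives no separate argument: it invokes the abstract theorem of \cite{GrotStil}, whose proof is this DMS modified-entropy argument with $H_\varepsilon[f]=\frac12\|f\|^2+\varepsilon(Bf,f)$, fed by $\Pi\A\Pi=0$, the two coercivity estimates and the elliptic bounds, and extended from $D$ to $D(\L)$ via the core property. One small correction: $B=\Pi B$ holds automatically because the range of $(\A\Pi)^*$ lies in the range of $\Pi$, whereas $\Pi\A\Pi=0$ is what gives $B\Pi=0$ and hence the factor $\|(I-\Pi)f\|$ in your Step 2 bounds.
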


\begin{remark}
The constants $\kappa_1$ and $\kappa_2$ inherit their dependence from the microscopic and macroscopic coercivity constants $\Lambda_m$ and $\Lambda_M$, as well as from the elliptic bounds $c_1$ and $c_2$. In the present $\SE(2)$ setting, this reflects the geometry of the state space: the spectral gap of the Laplacian on the compact rotational component $\mathbb{S}^1$ enters through $\Lambda_m$, while the confining properties of the potential $\Phi$ determine $\Lambda_M$ via the macroscopic diffusion on $\mathbb R^2$.
\end{remark}

%%%%%%%%%%%%%%%5
%\vspace{-0.6cm}
\section{Conclusion and outlook}

Although the corresponding convergence properties are previously known results for the corresponding dynamics on $\mathbb{R}^2 \times \mathbb{S}^1$, the present approach highlights how these properties emerge from the intrinsic geometry of $\SE(2)$. This viewpoint clarifies the role of compact and non-compact directions in hypocoercivity, hence provides a natural framework for extensions to other Lie groups.

%-------------------------------------------------------
%\newpage
\addcontentsline{toc}{section}{References}

\bibliographystyle{plain}

%-----------------------------------------------------------------------------------------------------------------------------------------------

\begin{acknowledgements}
\noindent I would like to express my deep gratitude to my advisor, Martin Grothaus, for his patience and for the time he generously took to introduce me to this beautiful problem, allowing me to explore hypocoercivity from a geometric perspective.
\end{acknowledgements}

\vspace{1cm}

\AuthorInfo{Martin Grothaus}{Department of Mathematics\\RPTU University  Kaiserslautern--Landau, Germany}{grothaus@rptu.de}

\vspace{-0.5cm}

\AuthorInfo{Andrea Vanessa Hurtado Quiceno}{Department of Mathematics\\RPTU University  Kaiserslautern--Landau, Germany}{avanessa.hquiceno@edu.rptu.de}
\end{document}